\documentclass[11pt,reqno]{amsart}

\usepackage[T1]{fontenc}
\usepackage{amsmath,amssymb,amsthm,mathtools}
\usepackage{tikz-cd}
\usepackage{upref}
\usepackage{url}
\usepackage[colorlinks=true,linkcolor=blue,citecolor=blue,urlcolor=cyan]{hyperref}

\newtheorem{theorem}{Theorem}[section]
\newtheorem{lemma}[theorem]{Lemma}
\newtheorem{remark}[theorem]{Remark}
\newtheorem{proposition}[theorem]{Proposition}
\newtheorem{corollary}[theorem]{Corollary}
\newtheorem{conj}[theorem]{Conjecture}
\theoremstyle{definition}

\newcommand{\CC}{\mathbb{C}}
\newcommand{\QQ}{\mathbb{Q}}
\newcommand{\ZZ}{\mathbb{Z}}

\newcommand{\cC}{\mathcal{C}}
\newcommand{\cF}{\mathcal{F}}

\newcommand{\cS}{\mathcal{S}}

\newcommand{\ov}{\overline}

\newcommand{\alg}{\mathrm{alg}}
\newcommand{\homol}{\mathrm{hom}}
\newcommand{\tor}{\mathrm{tor}}
\DeclareMathOperator{\CH}{CH}
\DeclareMathOperator{\Pic}{Pic}
\DeclareMathOperator{\Gr}{Gr}
\DeclareMathOperator{\im}{Im}
\DeclareMathOperator{\cl}{cl}
\DeclareMathOperator{\AJ}{AJ}

\title[Rationally connectedness and Generalized Franchetta conjecture]
{Rationally connectedness and O'Grady's generalized Franchetta conjecture for K3 surfaces}
\date{}
\author[Yuan Lu]{Yuan Lu}
\address{Department of Mathematics, ETH Zürich}
\email{yuan.lu@math.ethz.ch}

\begin{document}
\allowdisplaybreaks

\begin{abstract}
O'Grady's generalized Franchetta conjecture asks whether any codimension two cycle on the universal polarized K3 surfaces restricts to a multiple of the Beauville–Voisin class on a given K3 surface. We apply Bergeron–Li's result on the cohomology of universal K3 surfaces to give an affirmative answer to this conjecture when the universal K3 surface is rationally connected, including the case of genus $22$.
\end{abstract}

\maketitle
\tableofcontents

\section{Introduction}
\label{sec:introduction}

Throughout, we work over the field of complex numbers. Recall that for a projective K3 surface $S$, Beauville and Voisin showed in \cite{BV} that there exists a canonical class $o_S$ in the Chow group $\CH^2(S)$ that satisfies the following properties:
\begin{enumerate}
    \item For each closed point $x\in S$ on a (possibly singular) rational curve $C\subset S$, we have $[x]=o_S\in \CH^2(S)$.
    \item For any divisors $D_1,D_2\in \CH^1(S)$, the intersection $D_1\cdot D_2$ is a multiple of $o_S$.
    \item The second Chern class $c_2(S)=24o_S$.
\end{enumerate}
The class $o_S$ is called the Beauville--Voisin class of the K3 surface $S$.\\
\indent For each $g\ge 3$, denote $\cF_g$ the moduli space of primitively polarized K3 surfaces of degree $2g-2$ (or equivalently, genus $g$). Then we can take $\cF_g'\subset\cF_g$ as the smooth dense open subset parametrizing primitively polarized K3 surfaces with trivial automorphism groups. Hence, we have a universal family of K3 surfaces $\pi: \cS_g'\to \cF_g'$.\\
\indent Motivated by the Franchetta conjecture on the moduli space of curves \cite{AC87}, O'Grady presented the generalized Franchetta conjecture on the moduli space of K3 surfaces in \cite[Section 5]{OG}, which is stated as follows.
\begin{conj}[Generalized Franchetta Conjecture]
\label{Generalized Franchetta Conjecture}
     Assume $g\ge3$. For each closed point $x\in \cF_g'$, denote $S_x:=\pi^{-1}(x)$. Then, for any class $a\in \CH^2(\cS_g')$ and any closed point $x\in \cF_g'$, the restriction~$a|_{S_x}$ is a multiple of the Beauville--Voisin class $o_{S_x}$.
\end{conj}
At present, Conjecture \ref{Generalized Franchetta Conjecture} is largely open. So far, it is known in the following cases: in \cite{PSY}, Pavic--Shen--Yin proved Conjecture \ref{Generalized Franchetta Conjecture} for $3\le g\le 10$ and $g=12,13,16,18,20$, which are the genera $g\ge3$ with known Mukai models. Here, a Mukai model refers to a way to describe a general genus $g$ polarized K3 surface as the zero locus of a general section of some globally generated homogeneous vector bundle over a homogeneous variety. In \cite{FL}, Fu--Laterveer used the geometry of special cubic fourfolds to prove Conjecture \ref{Generalized Franchetta Conjecture} for $g=14$. In \cite{Lu}, Lu used Mukai's program in genus $11$ to relate the geometry of $\cS_{11}'$ with geometry of universal curve of genus $11$, and proved Conjecture \ref{Generalized Franchetta Conjecture} for $g=11$.\\
\indent The main result of this paper is the following theorem.
\begin{theorem}
\label{thm:main}
Suppose that $g\ge3$ and $\cS_g'$ is rationally connected. Then the generalized Franchetta conjecture holds for genus $g$.
\end{theorem}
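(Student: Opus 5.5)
The plan is to first reduce the Chow-theoretic statement to a cohomological one using rational connectedness, and then invoke Bergeron--Li's computation of the cohomology of the universal K3 surface. Fix a smooth projective compactification $\ov{\cS}$ of $\cS_g'$; it is still rationally connected, so $\CH_0(\ov{\cS})=\ZZ$. By the Bloch--Srinivas decomposition of the diagonal, the cycle class map $\CH^2(\ov{\cS})_\QQ\to H^4(\ov{\cS},\QQ)$ is injective; more precisely, algebraic and homological equivalence coincide for codimension two cycles. Since $H^{3,0}=H^{1,0}=0$, the intermediate Jacobian $J^3$ is an abelian variety and $\CH^2_{\homol}=\CH^2_{\alg}$ is controlled by the Abel--Jacobi map, which is injective on torsion.

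The key step is to show that $\CH^2(\ov{\cS})_\QQ$ injects into $H^4(\ov{\cS},\QQ)$. I would argue that for rationally connected $X$, $\CH_0(X)_\QQ=\QQ$ forces $\CH^2_{\homol}(X)_\QQ$ to be a quotient of $\CH_0$ of a curve via a correspondence, hence parametrized by $J^3$. Moreover, Voisin-type arguments show that $\CH^2_{\alg}\otimes\QQ \to J^3\otimes\QQ$ is an isomorphism. Given $a\in\CH^2(\cS_g')$, lift it to $\ov a\in\CH^2(\ov{\cS})$. The difference $\ov a - (\text{lift of a cohomologically determined class})$ is algebraically trivial. Restricting an algebraically trivial codimension two cycle to a fibre $S_x$ gives a class in $\CH^2(S_x)_{\alg}=\CH_0(S_x)_{\hom}$. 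The restriction map factors through an algebraic family parametrized by an abelian variety ($J^3(\ov{\cS})$ or its algebraic part). By Roitman-type arguments, it induces a homomorphism from an abelian variety to $\CH_0(S_x)_{\hom}$, and the image of such a map is known to be zero for a K3 surface, since $\CH_0(S)_{\hom}$ is "infinite dimensional" and contains no nonzero images of abelian varieties (Mumford). Hence $a|_{S_x}$ depends only on the cohomology class of $a$.

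Next I use Bergeron--Li: $H^4(\cS_g',\QQ)$ (or at least the image of the restriction to fibres in low degree) is generated by tautological classes, namely polynomials in the polarization class $\ell$, $c_2(T_\pi)$, and pullbacks of classes from $\cF_g'$. I can therefore pick a cycle $b\in\CH^2(\cS_g')_\QQ$ that is a combination of $\ell^2$, $c_2(T_\pi)$, $\ell\cdot\pi^*D$ and $\pi^*(\text{codim }2)$, with the same cohomology class as $a$. Then $a-b$ is homologically trivial on $\cS_g'$. This requires care: the comparison must be done on $\ov{\cS}$ modulo cycles supported on the boundary, and boundary cycles restrict to zero on $S_x$. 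The restriction $b|_{S_x}$ is a combination of $\ell|_{S_x}^2$ and $c_2(S_x)$, hence a multiple of $o_{S_x}$ by the Beauville--Voisin properties, and the previous paragraph gives $(a-b)|_{S_x}=0$ rationally. Finally, $\CH^2(S_x)$ is torsion free by Roitman, so the statement holds integrally.

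The main obstacle is the reduction from homological to rational triviality. Passing through the compactification makes boundary contributions to $H^4$ and to $J^3$ harmless, since they restrict trivially on fibres, but this needs a careful localization sequence argument. It also needs the right version of Bergeron--Li on the open part, together with the mixed Hodge theory comparing $H^4(\ov{\cS})$ and $H^4(\cS_g')$.
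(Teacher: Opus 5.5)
There is a genuine gap in the step where you dispose of the algebraically trivial part. You assert that $\CH_0(S)_{\homol}$ of a K3 surface contains no nonzero image of an abelian variety under a homomorphism induced by a correspondence. This is false.

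Take a curve $C\subset S$ that is not a constant cycle curve. A general member of $|H|$ works: two general points of $S$ lie on a common member of $|H|$, and $\CH_0(S)\neq\ZZ$ by Mumford. Let $\nu\colon\tilde C\to S$ be its normalization. Then $\nu_*\colon J(\tilde C)=\CH_0(\tilde C)_{\homol}\to\CH_0(S)_{\homol}$ is a homomorphism from an abelian variety, induced by the graph of $\nu$, and it is nonzero. Mumford's theorem says that $\CH_0(S)_{\homol}$ is not finite dimensional; it does not exclude finite-dimensional subgroups.

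So knowing that $\CH^2(\ov\cS)_{\alg}\cong J^3(\ov\cS)$ is parametrized by an abelian variety gives no reason for its restriction to $S_x$ to vanish. Relatedly, your claim that $\CH^2(\ov\cS)_\QQ\to H^4(\ov\cS,\QQ)$ is injective because $\ov\cS$ is rationally connected is false in general. For a cubic threefold, $\CH^2_{\homol}\cong J^3\neq 0$. It is also not what needs to be proved.

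The missing input is the vanishing $\Gr^W_3H^3(\cS_V,\QQ)=0$ (Lemma \ref{lem:universal-weight}). The paper derives it from Bergeron--Li's vanishing of $H^3(Y,\QQ)$ and $H^1(Y,\mathbf E)$ on the orthogonal arithmetic quotient, via the Leray spectral sequence and weights. By Deligne's weight sequence, $H^3(\ov X,\QQ)$ is then spanned by Gysin images of the groups $H^1(D_i,\QQ)$ of the boundary components. Hence $\prod_i\Pic^0(D_i)\to J^3(\ov X)$ is surjective.

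Given a homologically trivial class on $X=\cS_V$, the paper argues in four steps:
\begin{enumerate}
\item Lift a multiple of it to a homologically trivial class on $\ov X$ (Lemma \ref{lem:lifting}, using semisimplicity of polarizable Hodge structures and Lefschetz $(1,1)$ on the $D_i$). This is exactly the boundary bookkeeping you flagged but did not carry out.
\item Make the lift algebraically trivial, after a further multiple, by Bloch--Srinivas.
\item Subtract a boundary-supported cycle $\sum_i(\iota_i)_*\beta_i$ with the same Abel--Jacobi image.
\item Use injectivity of $\AJ_{\ov X}$ on $\CH^2(\ov X)_{\alg}$ (Bloch--Srinivas, Murre) to conclude that the lift equals a cycle supported on $D$, hence restricts to zero on $X$.
\end{enumerate}
This shows that $\CH^2(\cS_V)_{\homol}$ is torsion (Proposition \ref{prop:open-vanishing}). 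Your treatment of the tautological part then finishes the proof: Bergeron--Li after removing Noether--Lefschetz divisors, Beauville--Voisin on the fibres, and Roitman for torsion.

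Without control of $\Gr^W_3H^3$ of the open part, nothing in your argument kills algebraically trivial cycles that come from the interior.
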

Here, a quasi-projective variety is rationally connected if it is birational to a smooth projective variety that is rationally connected.

In genus $22$, Farkas and Verra proved that the universal K3 surface is unirational \cite[Theorem~1.1]{FV22}. Since a smooth projective unirational variety is rationally connected, we obtain the following corollary, which proves a new case to the generalized Franchetta conjecture.
\begin{corollary}
\label{cor:genus22}
Conjecture \ref{Generalized Franchetta Conjecture} holds for $g=22$.
\end{corollary}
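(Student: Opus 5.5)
Throughout, $H^*$ denotes singular cohomology with $\QQ$-coefficients.

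The plan is to split the problem into a cohomological part and a Chow-theoretic part, and to let rational connectedness bridge the two. First, I will reduce to showing that the restriction $a|_{S_x}$ is cohomologous to a multiple of $o_{S_x}$. Indeed, for a K3 surface the cycle class map $\CH^2(S)_\QQ \to H^4(S,\QQ)$ is not injective, so a genuine Chow-theoretic input is needed. The key observation is this: if $a\in \CH^2(\cS_g')$ is homologically trivial on the total space (after a correction), then rational connectedness should force $a$ to vanish in Chow.

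Concretely, I will proceed in three steps.

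\textbf{Step 1.} Use Bergeron--Li's computation of the low-degree cohomology of the universal K3 surface. It says that $H^4(\cS_g',\QQ)$ is spanned by products of the tautological classes: $\pi^*\lambda$, $\pi^*$ of Noether--Lefschetz divisors, the relative polarization $\ell$, and $c_2(T_\pi)$. Each such generator has Chow lift $\alpha$ whose fibrewise restriction $\alpha|_{S_x}$ is a multiple of $o_{S_x}$. For pulled-back classes the restriction is $0$. The restriction of $\ell^2$ is $L^2$, and that of $c_2(T_\pi)$ is $24o_{S_x}$; both are multiples of $o_{S_x}$ by the Beauville--Voisin properties (2) and (3). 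Hence, given any $a$, I can subtract a $\QQ$-combination $b$ of these lifts so that $a-b$ is homologically trivial in $H^4(\cS_g')$. Some care is needed, because Bergeron--Li work on the full open locus or on a compactification, so I may need to pass to a smooth projective compactification $\overline{\cS}$ and use the localization sequence. Boundary classes then restrict trivially to a general fibre, and the specialization argument over the open set handles every closed point $x$. (Torsion is irrelevant after tensoring with $\QQ$, and integrally $a|_{S_x}$ is determined by its rational class because $\CH^2(S)$ is torsion free by Roitman.)

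\textbf{Step 2.} Prove that a rationally connected smooth projective variety $X$ has $\CH^2(X)_{\homol,\QQ} \cong J^3_a(X)$ with trivial Abel--Jacobi kernel. This is Bloch--Srinivas: $\CH_0(X)=\ZZ$ gives a decomposition of the diagonal. It follows that algebraic and homological equivalence coincide in codimension two, and that the Abel--Jacobi map on $\CH^2_{\alg}$ is injective. Moreover, $H^{3}(X)$ has level one, since $h^{3,0}=0$ for a rationally connected variety.

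\textbf{Step 3.} Take a smooth projective model $X$ of $\cS_g'$ and lift $c:=a-b$ to $X$. By Step 2, $c$ is algebraically trivial, with Abel--Jacobi image in the intermediate Jacobian $J^3_a(X)$. Restriction to a fibre $S_x$ is compatible with Abel--Jacobi maps, and the target $J^3(S_x)=0$ since $H^3(S_x)=0$. Algebraically trivial cycles on a K3 surface need not vanish, however, so instead I argue via Bloch--Srinivas directly. An algebraically trivial codimension-two cycle on $X$ is the image of a divisor class on a curve-parametrized family, via a correspondence factoring through $\Pic^0$ of a curve. Restricting to $S_x$ therefore gives a cycle in the image of $\Pic^0(C)\to \CH_0(S_x)$ under a correspondence. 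Any homomorphism from an abelian variety to $\CH_0(S_x)_{\hom}$ is zero, by Roitman--Mumford, since the albanese of a K3 surface is trivial. Hence $c|_{S_x}=0$ and $a|_{S_x}=b|_{S_x}$, which is a multiple of $o_{S_x}$.

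The main obstacle I expect is Step 1. One must match Bergeron--Li's statement, which is about cohomology of the open universal family or a toroidal/Baily--Borel model, with $H^4$ of a smooth projective birational model $X$. The issue is that $X$ may have extra classes supported on the boundary or on the exceptional loci, and such classes must be shown to restrict to multiples of $o_{S_x}$, or to zero, on fibres over $\cF_g'$. To handle this, I would first try to restrict everything to the open part $\cS_g'$, where these classes vanish. I would then use the fact that $a|_{S_x}$ depends only on $a$ over the open set, and run Steps 2–3 using the exact localization sequence in Chow and Borel--Moore homology.
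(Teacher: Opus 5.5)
Step~3 fails. You claim that every homomorphism from an abelian variety to $\CH_0(S_x)_{\homol}$ induced by a correspondence is zero. This is false for a K3 surface. Any degree-zero $0$-cycle $z$ on $S_x$ is supported on a smooth curve $i\colon C\hookrightarrow S_x$ (a general member of $|nL|$, $n\gg0$, through its support), so $z\in i_*\Pic^0(C)$. Since $\CH_0(S_x)_{\homol}\neq0$ by Mumford, some $i_*\colon\Pic^0(C)\to\CH_0(S_x)_{\homol}$ is nonzero. Triviality of the Albanese only constrains regular homomorphisms \emph{out of} $\CH_0(S_x)_{\homol}$, not maps into it. So knowing that a lift $\bar c$ is algebraically trivial on a smooth projective model tells you nothing about $\bar c|_{S_x}$. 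For rationally connected $\ov X$ one has $\CH^2(\ov X)_{\alg}\cong J^3(\ov X)$ (Bloch--Srinivas, Murre), which can be nonzero, and nothing in your argument forces $\AJ_{\ov X}(\bar c)=0$.

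The missing idea is to show that $\CH^2(\cS_V)_{\homol}$ is torsion, by correcting $\bar c$ with cycles supported on the boundary.
\begin{itemize}
\item Take a smooth projective compactification $\ov X\supset\cS_V$ with SNC boundary $\bigcup_i D_i$.
\item If the Gysin map $\bigoplus_i H^1(D_i,\QQ)(-1)\to H^3(\ov X,\QQ)$ is surjective, then $\prod_i\Pic^0(D_i)\to J^3(\ov X)$ is onto. Hence $\AJ_{\ov X}(\bar c)=\AJ_{\ov X}\bigl(\sum_i(\iota_i)_*\beta_i\bigr)$ for suitable $\beta_i\in\Pic^0(D_i)$.
\item The difference $\bar c-\sum_i(\iota_i)_*\beta_i$ is algebraically trivial with zero Abel--Jacobi image, so it vanishes by injectivity of $\AJ_{\ov X}$ on $\CH^2(\ov X)_{\alg}$. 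It restricts to $\bar c|_{\cS_V}$ on $\cS_V$, so that restriction is zero.
\end{itemize}
By Deligne's weight sequence, the required surjectivity is exactly $\Gr^W_3H^3(\cS_V,\QQ)=0$. This is a genuinely additional input, and your proposal never considers $H^3$ at all. The paper obtains it (Lemma~\ref{lem:universal-weight}) from the Leray spectral sequence together with Bergeron--Li's vanishing of $H^3(Y,\QQ)$ and $H^1(Y,\mathbf E)$ on the orthogonal arithmetic quotient.

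There are also three secondary issues.
\begin{itemize}
\item Before invoking Bloch--Srinivas, you must arrange that the lift of $c$ is homologically trivial on $\ov X$. This needs a correction by divisors on the $D_i$, using semisimplicity of polarizable Hodge structures and Lefschetz $(1,1)$ (Lemma~\ref{lem:lifting}); ``restricting to the open part'' does not do this.
\item Bergeron--Li do not assert that all of $H^4(\cS_g')$ is tautological. What they prove (Theorem~\ref{thm: cohomological GFC}) is that $\alpha-tc_2(T_\pi)$ becomes cohomologically trivial off finitely many Noether--Lefschetz divisors, and that is all you need.
\item You never justify that $\cS_{22}'$ is rationally connected. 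This is Farkas--Verra's unirationality theorem, the only genus-specific input of the corollary.
\end{itemize}
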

\begin{remark}
For $3\le g\le10$ and $g=12,13,16,18,20$, Mukai models give unirational
parametrizations of the universal K3 surfaces \cite[Section 2]{PSY}. For $g=14$, the unirationality of the universal K3 surface follows from
\cite[Theorem~1.1]{FV14}. For $g=11$, Mukai's program \cite{Muk} induces a birational map between a projective bundle on $\cS_{11}'$ and $\cC_{11}$, the universal family of smooth curves of genus $11$, and $\cC_{11}$ is unirational by \cite[Corollary 2]{Bar}. Therefore, Theorem \ref{thm:main} recovers the
previously known cases of Conjecture \ref{Generalized Franchetta Conjecture}.
\end{remark}
The proof of Theorem \ref{thm:main} relies on the result of Bergeron–Li \cite{BL} on the cohomology of $\cF_g'$ and $\cS_g'$. In \cite[Theorem 1.2.1]{BL}, they proved the following cohomological version of the generalized Franchetta conjecture.
\begin{theorem}
\label{thm: cohomological GFC}
Assume $g\ge3$. For every $\alpha\in\CH^2(\cS_g')$, there exists
$t\in\QQ$ and a finite union $D\subset\cF_g'$ of Noether--Lefschetz
divisors such that, setting $V:=\cF_g'\setminus D$ and
$\cS_V:=\pi^{-1}(V)$, one has
\[
 \cl_{\cS_V}\bigl((\alpha-tc_2(T_\pi))|_{\cS_V}\bigr)
 =0\in H^4(\cS_V,\QQ).
\]
\end{theorem}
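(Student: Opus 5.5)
The plan is to decompose $H^4(\cS_V,\QQ)$ by the Leray spectral sequence of the smooth projective family $\pi$, and then kill each graded piece separately, using the known structure of low-degree cohomology of orthogonal Shimura varieties. By Deligne's decomposition theorem for smooth projective morphisms, $R\pi_*\QQ\simeq\bigoplus_i R^i\pi_*\QQ[-i]$ in the derived category. For K3 fibres $R^1=R^3=0$, $R^0\pi_*\QQ=\QQ$ and $R^4\pi_*\QQ=\QQ$. Hence, over any open $V\subset\cF_g'$,
\[
H^4(\cS_V,\QQ)\simeq H^4(V,\QQ)\oplus H^2(V,R^2\pi_*\QQ)\oplus H^0(V,\QQ).
\]
Moreover $R^2\pi_*\QQ=\QQ h\oplus\mathbb{V}$, where $h$ is the polarization and $\mathbb{V}$ is the primitive local system. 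The decomposition can be chosen so that the components are cut out by algebraic operations (cup product with $h$ and powers of $c_2(T_\pi)$, together with $\pi_*$). With that choice, the components of an algebraic class stay in the image of the cycle class map.

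Given $\alpha\in\CH^2(\cS_g')$, the $H^0(V,\QQ)$-component is the fibre degree $\deg(\alpha|_{S_x})$. Since $\deg c_2(T_{S_x})=24$, I set $t=\deg(\alpha|_{S_x})/24$. Then $\beta:=\alpha-tc_2(T_\pi)$ has vanishing $H^0(V,R^4)$-component for every $V$. The remaining components of $\cl(\beta)$ fall into three kinds:
\begin{itemize}
\item a class in $H^4(\cF_g',\QQ)$ of algebraic origin, namely $\pi_*(\beta\cdot h^k)$-type terms;
\item a class in $H^2(\cF_g',\QQ)\cdot h$;
\item a class in $H^2(\cF_g',\mathbb{V})$.
\end{itemize}

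For the first two kinds, I would use the known results on the Picard group and the Chow/cohomology groups of $\cF_g$. By Bergeron--Li--Millson--Moeglin, $H^2(\cF_g,\QQ)$ is spanned by Noether--Lefschetz divisors. In particular, the Hodge class $\lambda$ is a rational combination of NL divisors, by Borcherds. By Bergeron--Li's extension of this to codimension two, $H^4$ in low degree is spanned by special cycles, which are intersections of NL divisors, together with $\lambda$ times such classes. All of these restrict to zero on the complement $V$ of finitely many NL divisors. Choosing $D$ to contain the finitely many NL divisors that actually occur, the $H^4(V)$ and $H^2(V)h$ components of $\cl(\beta)$ vanish.

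The main obstacle is the primitive part $H^2(V,\mathbb{V})$. Here I would invoke Bergeron--Li's theorem that, for the orthogonal group $\mathrm{SO}(2,19)$ in this low degree, the cohomology with coefficients in the standard representation is generated by special cycles with coefficients. Concretely, they are classes coming from the locus where an extra algebraic class exists, i.e.\ Kudla--Millson theta lifts, via Zucker's conjecture, Arthur's classification and the Hodge-type argument restricting which representations contribute. Since the component of $\cl(\beta)$ is a Hodge class in $H^2(\cF_g',\mathbb{V})$, it is a combination of these NL-supported classes. After removing the finitely many supporting NL divisors it therefore vanishes on $V$. Combining the three steps gives $\cl_{\cS_V}(\beta|_{\cS_V})=0$. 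The delicate points are the compatibility of the Leray decomposition with restriction to opens, and the fact that only finitely many NL divisors are needed. The latter holds because each group involved is finite-dimensional.
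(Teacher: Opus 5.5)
The paper gives no proof of this statement. It is quoted from Bergeron--Li \cite[Theorem 1.2.1]{BL} and used as a black box in Theorem~\ref{thm:franchetta}. So your proposal does not compete with an argument in the paper: it reconstructs the cited theorem. As an outline it is correct, and it is in substance how that result is obtained. Your four steps are:
\begin{enumerate}
\item Split $H^4(\cS_V,\QQ)$ via the algebraic operators $\pi_*(-)$, $\pi_*(-\cdot H)$ and $\pi_*(-\cdot c_2(T_\pi)/24)$. This makes the splitting compatible with restriction to opens and keeps the components algebraic.
\item Fix $t$ by the fibre degree.
\item Kill the $H^4(V)$ and $H^2(V)h$ pieces, using that low-degree Hodge classes on the orthogonal Shimura variety are spanned by special cycles and their products with $\lambda$ (BLMM, Borcherds).
\item Kill the primitive piece, using that Hodge classes in $H^2(Y,\mathbf E)$ are spanned by special cycles with coefficients $[Z_v]\otimes v$. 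These are pushforwards of the extra divisor classes on the K3 surfaces over $Z_v$.
\end{enumerate}
This rests on the same automorphic input as the vanishing $H^3(Y,\QQ)=H^1(Y,\mathbf E)=0$ that the paper borrows from \cite{BL} in Lemma~\ref{lem:universal-weight}. Quoting buys the paper brevity. Your version makes explicit which spanning statements are actually needed.

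Two points you pass over would need care in a write-up, though neither is a gap.

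First, a class $H$ on $\cS_g'$ with $H|_{S_x}=h$ exists in general only with $\QQ$-coefficients. A universal polarization can be obstructed by a torsion Brauer class on the base, so only a multiple lifts; this suffices.

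Second, $\cF_g'$ is not itself a locally symmetric variety. The complement $\cF_g\setminus\cF_g'$ contains divisors, for example the unigonal locus, where the inversion of the elliptic fibration with section preserves $L$. Hence $H^4(\cF_g',\QQ)$ and $H^2(\cF_g',\mathbb V)$ need not come from the Shimura variety, and you cannot apply the spanning theorems to them directly. The fix is the device already used in Lemma~\ref{lem:universal-weight}:
\begin{enumerate}
\item Pull back to the open $\cF^l$ of a level cover $Y$.
\item The components of $\cl(\beta)$ are Hodge classes lying in $W_4$. On $\cF^l$, $W_4$ is the image of the cohomology of $Y$, which is pure in this low degree. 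So these components lift to Hodge classes on $Y$.
\item Apply the spanning results on $Y$ and descend by the transfer $p_*$.
\end{enumerate}

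Finally, a minor inaccuracy: codimension-two special cycles are loci where a rank-two lattice embeds in the Picard group, not intersections of NL divisors. All you use is that they lie inside NL divisors, which is true.
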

Under the assumption that $\cS_g'$ is rationally connected, we could control $\CH^2(\cS_g')$ using its cohomology group, and Theorem \ref{thm:main} would follow from Theorem \ref{thm: cohomological GFC}.\\
\indent This article is organized as follows. In Section \ref{sec:vanishing},
we generalize a result of Bloch--Srinivas and Murre on the second Chow group to quasi-projective varieties. In Section \ref{sec:franchetta}, we apply the result in Section \ref{sec:vanishing} to $\cS_g'$, and combine Bergeron–Li's result to give a proof to Theorem \ref{thm:main}.

\indent \textbf{Acknowledgments} The author is deeply grateful to his undergraduate advisor, Qizheng Yin, for suggesting the topic of the generalized Franchetta conjecture, for verifying this paper, and for his invaluable guidance and insight throughout the author's undergraduate study. His encouragement was essential to this work.

The author also thanks Lie Fu, Davesh Maulik, Weimufei Wu for useful discussions. The author is especially grateful for a discussion with Zhiyu Tian, which led to the main idea of this paper. The author is also especially grateful to Kaiyuan Gu, who helped the author verify the proof of Lemma \ref{lem:universal-weight}.

\indent Finally, the author is truly indebted to his parents and family for their unwavering support throughout his studies.

\indent \textbf{AI disclosure} The author used Rethlas \cite{Rethlas} in this paper to complete the details of the proof. The author supplied the main idea of using rationally connectedness and Bergeron–Li's results to Rethlas, and corrected the proof it generated. All the main ideas and strategies in this paper were developed by the author before the generation of Rethlas. All arguments appearing in the paper were fully understood, completely rewritten, and independently verified by the author.

\section{A result on the second Chow group}
\label{sec:vanishing}

The main result of this section is Proposition~\ref{prop:open-vanishing},
which is a generalization of the Bloch--Srinivas and Murre's result on the second Chow group to quasi-projective varieties. First, we prove a lifting lemma.

\textbf{Convention.} Throughout this paper, $\CH^*(X)_{\hom}$ is the group of integrally homologically trivial cycles.

\begin{lemma}
\label{lem:lifting}
Let $X$ be a smooth quasi-projective variety, and let
$j:X\hookrightarrow\ov X$ be a smooth projective compactification such
that $D:=\ov X\setminus X$ is a simple normal crossings divisor.
Then for every homologically trivial cycle $\alpha\in\CH^2(X)_{\homol}$, there exist an integer $N>0$
and a homologically trivial cycle $\ov\alpha\in\CH^2(\ov X)_{\homol}$ such that
\[
 j^*\ov\alpha=N\alpha.
\]
\end{lemma}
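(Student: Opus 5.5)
Since $j^*\colon\CH^2(\ov X)\to\CH^2(X)$ is surjective by the localization sequence
\[
 \CH^1(D)\xrightarrow{\ \iota_*\ }\CH^2(\ov X)\xrightarrow{\ j^*\ }\CH^2(X)\to 0,
\]
where $\iota\colon D\hookrightarrow\ov X$ is the inclusion (extend a representing cycle by closure), the plan is as follows. I start from any lift $\beta\in\CH^2(\ov X)$ with $j^*\beta=\alpha$ and then correct $\beta$ by a class supported on $D$, so that some multiple becomes homologically trivial on $\ov X$. Write $D=\bigcup_i D_i$ with smooth components $D_i$ and inclusions $\iota_i\colon D_i\hookrightarrow\ov X$. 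Since $\alpha$ is homologically trivial on $X$, the class $\cl(\beta)\in H^4(\ov X,\ZZ)$ restricts to zero in $H^4(X,\ZZ)$. By the long exact sequence of the pair, or equivalently the Gysin sequence, $\cl(\beta)$ lies in the image of $H^4_D(\ov X,\ZZ)\to H^4(\ov X,\ZZ)$. For a simple normal crossings divisor, $H^4_D(\ov X)\cong H^{BM}_{2n-4}(D)$ with $n=\dim X$. Moreover, $\bigoplus_i H^2(D_i)\to H^4_D(\ov X)$, given by the Gysin maps, is surjective. To see this, use the spectral sequence or Mayer--Vietoris for Borel--Moore homology of $D$ in top-minus-one degree: the contributions from the double intersections $D_i\cap D_j$ have real dimension $2n-4$ and enter in degree $2n-4$ only through their fundamental classes, which are themselves pushforwards of the divisor classes $[D_i\cap D_j]\in H^2(D_i)$. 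So $\cl(\beta)=\sum_i\iota_{i*}\gamma_i$ with $\gamma_i\in H^2(D_i,\ZZ)$.

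The key step is to make the $\gamma_i$ algebraic. Each $\iota_{i*}\gamma_i$ is a Hodge class, but $\gamma_i$ individually need not be. I would use weights and the theory of mixed Hodge structures. The map $\bigoplus_i H^2(D_i,\QQ)(-1)\to H^4(\ov X,\QQ)$ is a morphism of pure Hodge structures of weight $4$. Its image is therefore a sub-Hodge structure, and the kernel splits by semisimplicity of polarizable Hodge structures. Hence a rational Hodge class in the image has a preimage that is a Hodge class. So after replacing $\gamma_i$ by rational Hodge classes $\gamma_i'$, we still have $\sum_i\iota_{i*}\gamma_i'=\cl(\beta)$ in $H^4(\ov X,\QQ)$. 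By the Lefschetz $(1,1)$ theorem on the smooth projective $D_i$, there are an integer $M$ and divisors $E_i\in\CH^1(D_i)$ with $\cl(E_i)=M\gamma_i'$ in $H^2(D_i,\QQ)$.

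Set $\ov\alpha':=M\beta-\sum_i\iota_{i*}E_i$. Then $j^*\ov\alpha'=M\alpha$, and $\cl(\ov\alpha')$ is torsion in $H^4(\ov X,\ZZ)$. Multiplying by the order $T$ of this torsion class gives $\ov\alpha:=T\ov\alpha'$, which is integrally homologically trivial with $j^*\ov\alpha=N\alpha$ for $N=TM$.

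I expect the main obstacle to be the precise identification of the image of $H^4_D(\ov X)$ with the sum of the Gysin images from the $D_i$. The fallback is rational coefficients together with Deligne's weight argument: the kernel of $H^4(\ov X,\QQ)\to H^4(X,\QQ)$ equals the image of $\bigoplus_i H^2(D_i,\QQ)(-1)$, because $W_4H^4(X)=\operatorname{im}(H^4(\ov X))$ and the weight spectral sequence degenerates at $E_2$. This rational statement suffices, since torsion is killed at the end anyway.
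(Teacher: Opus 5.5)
Your argument is correct and is essentially the paper's proof: you lift $\alpha$ by localization, place the class of the lift in the image of $\bigoplus_i H^2(D_i,\QQ)(-1)\to H^4(\ov X,\QQ)$ via Deligne's weight sequence, use semisimplicity to choose Hodge-class preimages, apply Lefschetz $(1,1)$ on the $D_i$, subtract, and kill the remaining torsion. Your primary integral claim that $\bigoplus_i H^2(D_i,\ZZ)\to H^4_D(\ov X,\ZZ)$ is surjective is false in general, because the Mayer--Vietoris sequence also contributes classes from $H_{2n-5}(D_i\cap D_j)$ through the connecting map (e.g.\ a plane and a cubic surface in $\mathbb{P}^3$ meeting along an elliptic curve); nothing depends on that claim, however, since your rational fallback is exactly what the paper uses.
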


\begin{proof}
Write $D=\bigcup_{i\in I}D_i$ as its irreducible components, and let
$\iota_i:D_i\hookrightarrow\ov X$ be the inclusions. Each $D_i$ is
smooth and projective. Deligne's weight spectral sequence
\cite{Del} gives the exact sequence
\begin{equation}
\label{eq:weight-four}
 \bigoplus_{i\in I}H^2(D_i,\QQ)(-1)
 \xrightarrow{\ f\ }H^4(\ov X,\QQ)
 \longrightarrow\Gr^W_4H^4(X,\QQ)\longrightarrow0,
\end{equation}
where $f=\sum_i(\iota_i)_*$ is the sum of the Gysin maps, and $\Gr^W_4H^4(X,\QQ)$ is the graded piece of the weight filtration on $H^*(X,\QQ)$.

The localization exact sequence for Chow groups allows us to choose
$\alpha'\in\CH^2(\ov X)$ with $j^*\alpha'=\alpha$. Put
$c:=\cl_{\ov X}(\alpha')\in H^4(\ov X,\QQ)$. Since $\alpha$ is
homologically trivial, $j^*c=0$ in $H^4(X,\QQ)$.

By the definition of the weight filtration, we have that $\Gr_4^WH^4(X,\QQ)$ is the image of 
$$j^*:H^4(\ov X, \QQ)\to H^4(X,\QQ).$$
Therefore, the exact sequence \ref{eq:weight-four} implies that $c\in \im f$.

The map $f$ is a morphism of polarizable rational Hodge structures.
Since the category of polarizable rational Hodge structures is semisimple, there exists a splitting morphism between rational Hodge structures
\[
 h:\im f\longrightarrow\bigoplus_{i\in I}H^2(D_i,\QQ)(-1)
\]
such that 
$$f\circ h=\mathrm{id}_{\im f}.$$
Write $h(c)=(c_i(-1))_{i\in I}$, with each $c_i\in H^2(D_i,\QQ)$. Since $h$ preserves rational Hodge structure, each $c_i$
is a rational Hodge class of type $(1,1)$ on $D_i$. By the Lefschetz $(1,1)$-theorem, there exists an integer
$N_0>0$ and divisors $\gamma_i\in\CH^1(D_i)$ such that
\[
 \cl_{D_i}(\gamma_i)=N_0c_i\qquad\text{in }H^2(D_i,\QQ).
\]
Consequently, the class
\[
 \beta:=N_0\alpha'-\sum_{i\in I}(\iota_i)_*\gamma_i
 \in\CH^2(\ov X)
\]
has zero rational cohomology class and satisfies $j^*\beta=N_0\alpha$.

Using that cycle class maps commutes with proper pushforwards, we have
$$\cl_{\ov X}(\beta)=N_0\cl_{\ov X}(\alpha')-\sum_{i\in I} (\iota_i)_*\cl_{D_i}(\gamma_i)=N_0c-N_0\sum_{i\in I} (\iota_i)_*c_i=0.$$
Therefore, taking $\ov \alpha=m\beta$ for some positive integer $m$, such that $\ov\alpha$ is integrally homologically trivial satisfies the requirement.
\end{proof}
Now we propose the following result. When $X$ is projective, it is a direct consequence of \cite[Theorem 1]{BS} and \cite[Theorem 1.9]{Mur} (whose proof contains a gap and was repaired by Kahn in \cite[Theorem 1]{Kah}). 
\begin{proposition}
\label{prop:open-vanishing}
Let $X$ be a smooth, rationally connected, quasi-projective variety. Suppose that
\[
 \Gr^W_3H^3(X,\QQ)=0.
\]
Then the group $\CH^2(X)_{\homol}$ of homologically trivial cycles is torsion. In particular, the rational
cycle-class map
\[
 \cl_X:\CH^2(X)_\QQ\longrightarrow H^4(X,\QQ)
\]
is injective.
\end{proposition}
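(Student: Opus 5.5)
We reduce to the projective case via Lemma~\ref{lem:lifting}. Choose a smooth projective compactification $j:X\hookrightarrow\ov X$ with $D=\ov X\setminus X$ a simple normal crossings divisor (Hironaka). Since $X$ is rationally connected and rational connectedness is a birational invariant of smooth projective varieties, $\ov X$ is rationally connected. Given $\alpha\in\CH^2(X)_{\homol}$, Lemma~\ref{lem:lifting} produces $N>0$ and $\ov\alpha\in\CH^2(\ov X)_{\homol}$ with $j^*\ov\alpha=N\alpha$. It therefore suffices to show that $\ov\alpha$ is torsion modulo cycles supported on $D$ (which restrict to zero on $X$), or simply that $\ov\alpha$ itself becomes torsion after a suitable modification.

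For the projective $\ov X$ we use Bloch--Srinivas and Murre. Since $\CH_0(\ov X)=\ZZ$, the Bloch--Srinivas decomposition of the diagonal \cite[Theorem 1]{BS} gives $\CH^2(\ov X)_{\alg}$ torsion-free up to finite index and shows that algebraic equivalence coincides with homological equivalence on codimension two cycles up to torsion. In addition, the Abel--Jacobi map $\AJ:\CH^2(\ov X)_{\alg}\to J^3_{\alg}(\ov X)$ is an isomorphism onto an abelian variety \cite[Theorem 1.9]{Mur}, \cite[Theorem 1]{Kah}. Concretely, some multiple of $\ov\alpha$ lies in $\CH^2(\ov X)_{\alg}$, and this group is identified, up to torsion, with the intermediate Jacobian $J^3(\ov X)$, which is built from $H^3(\ov X)$ (note $H^{3,0}=0$ by rational connectedness).

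The remaining point, and the main obstacle, is to kill the class $\AJ(\ov\alpha)\in J^3(\ov X)$ using the cycles on $D$. We have the freedom to modify $\ov\alpha$ by $\sum(\iota_i)_*\gamma_i$ with $\gamma_i\in\CH^1(D_i)_{\homol}$, i.e.\ elements of $\Pic^0(D_i)$; this does not change $j^*\ov\alpha$. By functoriality of Abel--Jacobi maps, these modifications move $\AJ(\ov\alpha)$ by the image of $\bigoplus_i\Pic^0(D_i)=\bigoplus_i J(H^1(D_i))\to J^3(\ov X)$, induced by the Gysin map $\bigoplus_i H^1(D_i,\QQ)(-1)\to H^3(\ov X,\QQ)$. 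The weight spectral sequence, as in the proof of Lemma~\ref{lem:lifting}, gives exactness of
\[
 \bigoplus_i H^1(D_i,\QQ)(-1)\to H^3(\ov X,\QQ)\to \Gr^W_3H^3(X,\QQ)\to 0,
\]
so the hypothesis $\Gr^W_3H^3(X,\QQ)=0$ says that this Gysin map is surjective. The induced homomorphism of complex tori is then surjective. Hence we can choose $\gamma_i$ so that $\beta:=\ov\alpha-\sum(\iota_i)_*\gamma_i$ satisfies $\AJ(\beta)=0$, after first replacing $\ov\alpha$ by a multiple lying in $\CH^2_{\alg}$. Murre's injectivity of $\AJ$ on $\CH^2(\ov X)_{\alg}$, valid up to torsion, then makes $\beta$ torsion, so $N'N\alpha=j^*(N'\ov\alpha)$ is torsion.

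The delicate bookkeeping concerns the torsion and finite-index statements. The rational cycle-class injectivity follows directly, since $\ker(\cl_X\otimes\QQ)$ is spanned by classes a multiple of which is integrally homologically trivial.
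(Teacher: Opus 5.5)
Your proposal is correct and follows the paper's proof essentially step for step: you lift via Lemma~\ref{lem:lifting}, pass to an algebraically trivial multiple by Bloch--Srinivas, and use surjectivity of the Gysin map $\bigoplus_i H^1(D_i,\QQ)(-1)\to H^3(\ov X,\QQ)$ to cancel the Abel--Jacobi image with classes from $\prod_i\Pic^0(D_i)$. You then conclude from injectivity of $\AJ$ on $\CH^2(\ov X)_{\alg}$; injectivity up to torsion is all the argument needs. The one remark to drop is that $\CH^2(\ov X)_{\alg}$ is ``torsion-free up to finite index'': it is isomorphic to the abelian variety $J^3(\ov X)$ and so has plenty of torsion, though nothing in your argument uses this claim.
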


\begin{proof}
By resolution of singularities \cite{Hir}, we can choose a smooth projective
compactification $j:X\hookrightarrow\ov X$ for which
$D:=\ov X\setminus X$ is a simple normal crossings divisor. The variety
$\ov X$ is rationally connected. Write
$D=\bigcup_{i\in I}D_i$ as its irreducible components, and denote $\iota_i:D_i\hookrightarrow\ov X$ as in
Lemma~\ref{lem:lifting}.

Deligne's weight spectral sequence \cite{Del} gives an exact sequence
\begin{equation}
\label{eq:weight-three}
 \bigoplus_{i\in I}H^1(D_i,\QQ)(-1)
 \xrightarrow{\ \varphi\ }H^3(\ov X,\QQ)
 \longrightarrow\Gr^W_3H^3(X,\QQ)\longrightarrow0,
\end{equation}
where $\varphi=\sum_i(\iota_i)_*$ is the sum of Gysin pushforwards. The last term is zero by assumption, so $\varphi$ is surjective. 

Since the Gysin map preserves integral Hodge structures, we have that $\varphi$ maps
\[
 \bigoplus_{i\in I}H^1(D_i,\ZZ)_{\mathrm{free}}(-1)
 \quad\text{into}\quad H^3(\ov X,\ZZ)_{\mathrm{free}}.
\]
Here $H^*(\ov X,\ZZ)_{\mathrm{free}}:=H^*(\ov X,\ZZ)/H^*(\ov X,\ZZ)_\tor$, similarly for $D_i$.

Denote
$$J^3(\ov X):=H^3(\ov X,\CC)/(H^3(\ov X,\ZZ)_{\mathrm{free}}\oplus F^2H^3(\ov X,\CC))$$
then we have the Abel–Jacobi map \cite{VoiII}
$$\AJ_{\ov X}: \CH^2(\ov X)_{\hom}\to J^3(\ov X).$$
We find that $\varphi$ induces a homomorphism of complex tori
\begin{equation}
\label{eq:boundary-jacobian}
 \psi:\prod_{i\in I}\Pic^0(D_i)\longrightarrow J^3(\ov X).
\end{equation}
Surjectivity of $\varphi$ over $\QQ$, and hence over $\CC$, implies that $\psi$ is surjective.

Therefore, we have the following diagram
\begin{equation}
\label{eq:boundary-aj-diagram}
\begin{tikzcd}[column sep=large,row sep=large]
 \displaystyle\bigoplus_{i\in I}\CH^1(D_i)_{\homol}
   \arrow[r,"{\sum_i(\iota_i)_*}"]
   \arrow[d,"{\bigoplus_i\AJ_{D_i}}"']
 &\CH^2(\ov X)_{\homol}\arrow[d,"\AJ_{\ov X}"]\\
 \displaystyle\prod_{i\in I}\Pic^0(D_i)
   \arrow[r,"\psi"']
 &J^3(\ov X)
\end{tikzcd}
\end{equation}
and from the definition of Gysin pushforward and Abel–Jacobi map, we can check it is commutative.

For any $\alpha\in\CH^2(X)_{\homol}$, by Lemma~\ref{lem:lifting},
there exists $m>0$ and $\alpha'\in\CH^2(\ov X)_{\homol}$ such that
$j^*\alpha'=m\alpha$. By \cite[Theorem 1(ii)]{BS}, we may replace $\alpha'$ with a multiple such that $\alpha'$ is algebraically trivial. The surjectivity of $\psi$
and the commutative diagram \eqref{eq:boundary-aj-diagram} give classes
$\beta_i\in\CH^1(D_i)_{\hom}$ with
\[
 \AJ_{\ov X}(\alpha')
 =\AJ_{\ov X}\left(\sum_{i\in I}(\iota_i)_*\beta_i\right).
\]
Set
\[
 \gamma:=\alpha'-\sum_{i\in I}(\iota_i)_*\beta_i
 \in\CH^2(\ov X)_{\hom}.
\]
Then $\AJ_{\ov X}(\gamma)=0$. 

By \cite[Theorem 1(i)]{BS} and \cite[Theorem 1.9]{Mur}, the restriction of $\AJ_{\ov X}$ to $\CH^2(\ov X)_{\alg}$ is an isomorphism. Since homological equivalence and algebraic equivalence coincide on divisors, we have $\beta_i$ is algebraically trivial. Therefore, $\gamma\in \CH^2(\ov X)_{\alg}$. Hence $\gamma=0$. 

The restriction of $\gamma$ on $X$ is $m\alpha$. Hence $\alpha$ is torsion.
\end{proof}

\section{Proof of the generalized Franchetta conjecture}
\label{sec:franchetta}

In this section, we apply Proposition~\ref{prop:open-vanishing} to the universal family $\cS_g'$, and combine Bergeron–Li's result to prove Theorem \ref{thm:main}. First, we obtain the following vanishing result 
\begin{lemma}
\label{lem:universal-weight}
For every dense open subset $V\subset\cF_g'$, denote $\cS_V:=\cS_g'\times_{\cF_g'} V$, one has
\[
\Gr^W_3H^3(\cS_V,\QQ)=0.
\]
\end{lemma}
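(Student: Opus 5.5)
We plan to use the Leray spectral sequence of the smooth projective morphism $\pi_V:\cS_V\to V$, which degenerates at $E_2$ by Deligne's theorem. The degeneration is compatible with mixed Hodge structures: the Leray filtration is a filtration by sub-MHS, since one can use the decomposition theorem or Saito's theory of mixed Hodge modules. Hence $\Gr^W_3H^3(\cS_V,\QQ)$ is an extension of the graded pieces
\[
\Gr^W_3 H^3(V,R^0\pi_*\QQ),\quad \Gr^W_3 H^2(V,R^1\pi_*\QQ),\quad \Gr^W_3 H^1(V,R^2\pi_*\QQ),\quad \Gr^W_3 H^0(V,R^3\pi_*\QQ).
\]
Since fibres are K3 surfaces, $R^1\pi_*\QQ=R^3\pi_*\QQ=0$. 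So it suffices to prove $\Gr^W_3H^3(V,\QQ)=0$ and $\Gr^W_3H^1(V,R^2\pi_*\QQ)=0$. Here $\Gr^W_3$ is exact on mixed Hodge structures, so it passes through the extensions.

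For the first term, we will use that $V$ is a dense open subset of the locally symmetric variety $\cF_g$, an arithmetic quotient of the type IV domain attached to $\mathrm{O}(2,19)$. Restriction from a smooth variety to a dense open subset is injective on the lowest weight part, i.e.\ $\Gr^W_kH^k(\cF_g',\QQ)\hookrightarrow\Gr^W_kH^k(V,\QQ)$ is the image of $H^k$ of a smooth compactification. Therefore $\Gr^W_3H^3(V)$ is the image of $H^3(\ov V)$ for any smooth compactification, and the same holds for $\cF_g'$. So it suffices to show $W_3H^3(\cF_g')=0$, i.e.\ that the pure part of $H^3$ vanishes. We plan to cite Bergeron–Li: in low degrees ($<$ roughly $n/4$ with $n=19$), $H^k(\cF_g,\QQ)$ is spanned by Noether–Lefschetz classes, hence vanishes in odd degree. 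Combined with Zucker/Looijenga-type comparison, this shows that the weight-$3$ part of $H^3$ comes from $IH^3$ of the Baily–Borel compactification, which vanishes in odd low degree. This is the same input as in their proof of Theorem~\ref{thm: cohomological GFC}.

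For the second term, we will decompose $R^2\pi_*\QQ=\QQ h\oplus \mathbb{V}$ with $\mathbb{V}$ the primitive part, a weight-$2$ VHS. Then $H^1(V,\QQ h)=H^1(V,\QQ)(-1)$ contributes $\Gr^W_3=\Gr^W_1H^1(V)(-1)$, which vanishes because $H^1(\ov V)=0$; indeed $H^1(\cF_g)=0$ by Kazhdan's property (T) / Bergeron–Li. For $\mathbb{V}$, the lowest weight part $\Gr^W_3H^1(V,\mathbb{V})$ is the image of the intermediate cohomology $IH^1(\ov V^{BB},\mathbb{V})$ (Zucker's conjecture, proved by Looijenga and Saper–Stern). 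That group is $L^2$-cohomology of $\mathrm{O}(2,19)$ with coefficients in the standard representation, and it vanishes in degree $1$ by Zuckerman/Vogan–Zuckerman vanishing (or again Bergeron–Li's computation for $\cS_g$).

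The main obstacle is making the restriction-to-open-$V$ argument and the identification of pure weight parts with $L^2$/intersection cohomology rigorous. We will first try to reduce everything to $V=\cF_g'$ via injectivity of lowest weight under restriction to dense opens, which holds also for VHS coefficients via Saito. We will then quote Bergeron–Li's vanishing of odd-degree cohomology in low degree for $\cS_g'$ directly, if their result is stated for $H^3(\cS_g')$.
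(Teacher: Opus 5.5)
Your proposal is correct and follows essentially the paper's route: Leray degeneration compatible with mixed Hodge structures, reduction to $\Gr^W_3H^3$ of the base and $\Gr^W_3H^1(-,R^2\pi_*\QQ)$, then Bergeron--Li's automorphic vanishing in low degree; the difference is that the paper handles the orbifold issue by passing to a level cover $\cF^l\subset Y$ and using $H^3(Y,\QQ)=0$, $H^1(Y,\mathbf E)=0$ and $p_*p^*=\deg p$, where you instead use intersection cohomology of the Baily--Borel compactification and Zucker's conjecture. One correction: restriction to a dense open subset is \emph{surjective}, not injective, on lowest weight parts (injectivity already fails for $\mathbb{A}^2\subset\mathbb{P}^2$ in $H^2$), and surjectivity is exactly what your reduction to $\cF_g'$ needs.
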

\begin{proof}
By the property of weight filtration, it is enough to
prove the lemma for $V=\cF_g'$.
Consider the Leray spectral sequence for
$\pi:\cS_g'\to\cF_g'$:
\[
E_2^{p,q}=H^p(\cF_g',R^q\pi_*\QQ)
\ \Longrightarrow\ H^{p+q}(\cS_g',\QQ).
\]
Since a K3 surface $S$ satisfies $H^1(S,\QQ)=H^3(S,\QQ)=0$, the only
terms that could contribute to $H^3(\cS_g',\QQ)$ are
\[
H^3(\cF_g',\QQ)
\qquad\text{and}\qquad
H^1(\cF_g',R^2\pi_*\QQ).
\]
Compatibility of the Leray filtration with mixed Hodge structures
shows that the only terms that could contribute to $\Gr^W_3H^3(\cS_g',\QQ)$ are 
\begin{equation}
\Gr^W_3H^3(\cF_g',\QQ)
\qquad\text{and}\qquad
\Gr^W_3H^1(\cF_g',R^2\pi_*\QQ).
\end{equation}
Here, $R^2\pi_*\QQ$ carries its natural variation of Hodge structure
of weight two.

Following the argument in \cite[Section 3]{BL}, we can take an orthogonal
arithmetic quotient $Y$ of type $\mathrm{SO}(2,19)$ and its dense open subset $\cF^l$, together with a finite surjective morphism $p:\cF^l\to \cF_g'$. Here, the variety $\cF^l$ is a dense open subset of a connected component of the moduli stack of polarized K3 surfaces of genus $g$ with a full level structure. Following \cite{BL}, the pullback $p^*R^2\pi_*\QQ$ is isomorphic to the local system corresponding to the universal family on $\cF^l$. Hence, it could be extended into an automorphic local system $\mathbf E$ on $Y$.

Apply \cite[Proposition~6.2.1]{BL} to the constant system $\mathbb Q$ and to $\mathbf E$, we have that
\begin{equation}
H^3(Y,\QQ)=0,
\qquad H^1(Y,\mathbf E)=0.
\end{equation}
We carry a weight $2$ variation of Hodge structure on $\mathbf E$. Using the property of weight filtration, we have that 
\begin{align}
W_3H^3(\cF^l,\QQ)
&\subseteq\im\bigl(H^3(Y,\QQ)
\longrightarrow H^3(\cF^l,\QQ)\bigr),\\
W_3H^1(\cF^l,p^*R^2\pi_*\QQ)
&\subseteq\im\bigl(H^1(Y,\mathbf E)
\longrightarrow H^1(\cF^l,p^*R^2\pi_*\QQ)\bigr).
\notag
\end{align}
Therefore, 
\begin{equation}
\Gr^W_3H^3(\cF^l,\QQ)=0
\qquad\text{and}\qquad
\Gr^W_3H^1(\cF^l,p^*R^2\pi_*\QQ)=0.
\end{equation}
The two maps
$$p_*:H^*(\cF^l,\QQ)\to H^*(\cF_g',\QQ)$$
$$p^*:H^*(\cF_g',\QQ)\to H^*(\cF^l,\QQ)$$
both preserve the weight filtration, and $p_*p^*=(\deg p)\operatorname{id}$, we have that 
\begin{equation}
\Gr^W_3H^3(\cF_g',\QQ)=0.
\end{equation}
Similarly, we have that
$$\Gr^W_3H^1(\cF_g',R^2\pi_*\QQ)=0.$$
Therefore, we have $\Gr_3^WH^3(\cS_g',\QQ)=0$.
\end{proof}
Now, we can prove the main result.
\begin{theorem}
\label{thm:franchetta}
Suppose that $g\ge3$ and that $\cS_g'$ is rationally connected. Then Conjecture \ref{Generalized Franchetta Conjecture} holds for genus $g$.
\end{theorem}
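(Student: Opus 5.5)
Fix $a\in\CH^2(\cS_g')$ and a closed point $x\in\cF_g'$. By Theorem~\ref{thm: cohomological GFC} there are $t\in\QQ$ and a finite union $D$ of Noether--Lefschetz divisors such that, with $V=\cF_g'\setminus D$, the class $\beta:=(a-tc_2(T_\pi))|_{\cS_V}$ is homologically trivial in $H^4(\cS_V,\QQ)$.

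The first step is to apply Proposition~\ref{prop:open-vanishing} to $X=\cS_V$. This variety is smooth and quasi-projective. It is rationally connected because it is a dense open subset of $\cS_g'$, so it is birational to $\cS_g'$, and rational connectedness is a birational notion in the sense defined in the introduction. Lemma~\ref{lem:universal-weight} gives $\Gr^W_3H^3(\cS_V,\QQ)=0$. The proposition then says that $\cl:\CH^2(\cS_V)_\QQ\to H^4(\cS_V,\QQ)$ is injective, so $\beta=0$ in $\CH^2(\cS_V)_\QQ$.

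The second step is to pass to fibres.
\begin{itemize}
\item If $x\in V$, restricting to $S_x$ gives $a|_{S_x}=t\,c_2(T_\pi)|_{S_x}=t\,c_2(S_x)=24t\,o_{S_x}$ in $\CH^2(S_x)_\QQ$. This uses $T_\pi|_{S_x}=T_{S_x}$ and property (3) of the Beauville--Voisin class.
\item Since $\CH^2$ of a K3 surface is torsion-free (Roitman), the identity holds integrally up to the rational multiple, so $a|_{S_x}$ is a multiple of $o_{S_x}$.
\end{itemize}

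The main obstacle is a point $x$ lying on $D$, where the Bergeron--Li statement gives no information. I would handle this by a specialization argument. Choose a smooth curve $B\subset\cF_g'$ through $x$ that is not contained in $D$, and put $\gamma:=a-tc_2(T_\pi)$. On the family $\cS_B\to B$, the class $\gamma|_{\cS_B}$ restricts to $0$ in $\CH^2(S_b)_\QQ$ for general $b\in B$.

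By the standard spreading/specialization principle, the locus of points where a family of cycle classes vanishes rationally is a countable union of closed subsets; equivalently, the specialization map on Chow groups of fibres sends $0$ to $0$. If the generic fibre class vanishes, then the class on the special fibre $S_x$ vanishes as well. This is Fulton's specialization map, which is compatible with restriction. Hence $a|_{S_x}=24t\,o_{S_x}$ for every $x$.

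One should note that $t$ is independent of $x$, and that the specialization of $c_2(T_\pi)$ restricts to $c_2(S_x)$. Both points are routine.
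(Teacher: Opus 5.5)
Your proposal is correct and follows the same route as the paper. You use Bergeron--Li to get $t$ and $V$, then apply Lemma~\ref{lem:universal-weight} and Proposition~\ref{prop:open-vanishing} on the rationally connected open set $\cS_V$ to kill $(a-tc_2(T_\pi))|_{\cS_V}$ in $\CH^2(\cS_V)_\QQ$, and finish with $c_2(S_x)=24o_{S_x}$ and Roitman. The only difference is cosmetic: for points $x\in D$ you use Fulton's specialization along a curve $B$ through $x$ (shrink $B$ so that $B\setminus\{x\}\subset V$, which makes $\gamma|_{S_x}=\sigma(\gamma|_{\cS_{B\setminus x}})=0$ immediate), whereas the paper reduces to a general point at the start via the spread-out argument in Voisin's book.
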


\begin{proof}
Fix a cycle $\alpha\in \CH^2(\cS_g')$. By a standard spread-out argument \cite[Section~1.1.2]{Voi}, it suffices to prove the result for $\alpha$ for a general closed point $x\in \cF_g'$. In addition, since the Chow group of a complex K3 surface is torsion-free \cite{Roi}, it suffices to prove the result up to torsion.

By Bergeron--Li's cohomological generalized Franchetta theorem (Theorem \ref{thm: cohomological GFC}), there exists a rational number $t$ such that the
cohomology class of
\[
 \alpha-tc_2(T_\pi)
\]
is supported on the inverse image of a finite union of
Noether--Lefschetz divisors on $\cF_g'$ under $\pi:\cS_g'\to \cF_g'$. Consequently, there exists a dense open subset
$V\subset\cF_g'$ such that
\begin{equation}
\label{eq:cohomological-relation}
 \cl_{\cS_V}\bigl((\alpha-tc_2(T_\pi))|_{\cS_V}\bigr)=0
 \in H^4(\cS_V,\QQ).
\end{equation}

Take an integer $m>0$ such that $mt$ is an integer. Since $\cS_V$ is rationally connected, combining Lemma \ref{lem:universal-weight} and Proposition \ref{prop:open-vanishing}, we have that
$$m(\alpha-tc_2(T_\pi))|_{\cS_V}\in \CH^2(\cS_V)$$
is torsion.

From the property of Beauville–Voisin class \cite{BV}, for any closed point $x\in V$, we have that $c_2(S_x)=24o_{S_x}$. Therefore, the restriction of $\alpha$ of $S_x$ is a multiple of Beauville–Voisin class $o_{S_x}$ up to torsion. Hence we proved the theorem.

\end{proof}


\begin{thebibliography}{99}

\bibitem{AC87}
E. Arbarello and M. Cornalba, The Picard groups of the moduli spaces
of curves, Topology {\bf 26} (1987), no.~2, 153--171; MR0895568

\bibitem{Bar}
I. Barros, Geometry of the moduli space of $n$-pointed $K3$ surfaces
of genus 11, Bull. Lond. Math. Soc. {\bf 50} (2018), no.~6,
1071--1084; MR3891944

\bibitem{BV}
A. Beauville and C. Voisin, On the Chow ring of a $K3$ surface,
J. Algebraic Geom. {\bf 13} (2004), no.~3, 417--426; MR2047674

\bibitem{BL}
N. Bergeron and Z. Li, Tautological classes on moduli spaces of
hyper-K\"ahler manifolds, Duke Math. J. {\bf 168} (2019), no.~7,
1179--1230; MR3953432

\bibitem{BS}
S.~J. Bloch and V. Srinivas, Remarks on correspondences and algebraic
cycles, Amer. J. Math. {\bf 105} (1983), no.~5, 1235--1253; MR0714776

\bibitem{Del}
P. Deligne, Th\'eorie de Hodge. II, Inst. Hautes \'Etudes Sci. Publ.
Math. No.~40 (1971), 5--57; MR0498551

\bibitem{FV14}
G. Farkas and A. Verra, The universal $K3$ surface of genus 14 via
cubic fourfolds, J. Math. Pures Appl. (9) {\bf 111} (2018), 1--20; MR3760746

\bibitem{FV22}
G. Farkas and A. Verra, The unirationality of the moduli space of
$K3$ surfaces of genus $22$, Math. Ann. {\bf 380} (2021), no.~3--4,
953--973; MR4297179

\bibitem{FL}
L. Fu and R.~C. Laterveer, Special cubic four-folds, $K3$ surfaces,
and the Franchetta property, Int. Math. Res. Not. IMRN {\bf 2023},
no.~10, 8872--8902; MR4589089

\bibitem{Hir}
H. Hironaka, Resolution of singularities of an algebraic variety over
a field of characteristic zero. I, II, Ann. of Math. (2) {\bf 79}
(1964), 109--203 and 205--326; MR0199184

\bibitem{Rethlas}
H. Ju, G. Gao, J. Jiang, B. Wu, Z. Sun, S. Liu, L. Chen,
Y. Wang, Y. Wang, Z. Wang, W. He, P. Wu, L. Xiao,
R. Liu, B. Dai and B. Dong, Automated conjecture resolution with
formal verification, preprint,
\url{https://arxiv.org/abs/2604.03789}.

\bibitem{Kah}
B. Kahn, On the universal regular homomorphism in codimension 2,
Ann. Inst. Fourier (Grenoble) {\bf 71} (2021), no.~2, 843--848;
MR4353922

\bibitem{Lu}
Y. Lu, On O'Grady's generalized Franchetta conjecture for genus $11$
$K3$ surfaces, preprint,
\url{https://arxiv.org/abs/2511.16875}.

\bibitem{Muk}
S. Mukai, Curves and $K3$ surfaces of genus eleven, in {\it Moduli of
vector bundles (Sanda, 1994; Kyoto, 1994)}, 189--197, Lecture Notes in
Pure and Appl. Math., 179, Dekker, New York, 1996; MR1397987

\bibitem{Mur}
J.~P. Murre, Applications of algebraic $K$-theory to the theory of
algebraic cycles, in {\it Algebraic geometry, Sitges (Barcelona), 1983},
216--261, Lecture Notes in Math., 1124, Springer, Berlin, 1985;
MR0805336

\bibitem{OG}
K.~G. O'Grady, Moduli of sheaves and the Chow group of $K3$ surfaces,
J. Math. Pures Appl. (9) {\bf 100} (2013), no.~5, 701--718; MR3115830

\bibitem{PSY}
N. Pavic, J. Shen and Q. Yin, On O'Grady's generalized Franchetta
conjecture, Int. Math. Res. Not. IMRN {\bf 2017}, no.~16, 4971--4983;
MR3687122

\bibitem{Roi}
A.~A. Ro\u{\i}tman, The torsion of the group of $0$-cycles modulo
rational equivalence, Ann. of Math. (2) {\bf 111} (1980), no.~3,
553--569; MR0577137

\bibitem{VoiII}
C. Voisin, {\it Hodge theory and complex algebraic geometry. II},
Cambridge Studies in Advanced Mathematics, 77, Cambridge Univ. Press,
Cambridge, 2003; MR1997577

\bibitem{Voi}
C. Voisin, {\it Chow rings, decomposition of the diagonal, and the
topology of families}, Annals of Mathematics Studies, 187, Princeton
Univ. Press, Princeton, NJ, 2014; MR3186044

\end{thebibliography}
\end{document}